\newcommand{\quash}[1]{}
\newtheorem{defin}{Definition}
\newtheorem{nt}{Remark}
\newtheorem{Th}{Theorem}
\newtheorem{lemma}{Lemma}
\newtheorem{defin-prop}{Definition-proposition}
\newfont{\ssdbl}{msbm8}
\newfont{\sdbl}{msbm9}
\newfont{\dbl}{msbm10 at 12pt}
\newcommand{\oo}{{\cal O}}
\newcommand{\res}{\mathop {\rm res}}
\newcommand{\Spec}{\mathop {\rm Spec}}
\newcommand{\dz}{\mathbb{Z}}
\newcommand{\dr}{\mathbb{R}}
\newcommand{\lrto}{\longrightarrow}
\def\C{{\mathbb C}}
\newcommand{\xlrto}{\xlongrightarrow}
\begin{document}

\author{
Denis V. Osipov
}

\title{Relative analytic reciprocity laws
{\thanks{The  author was supported by the Basic Research Program of the National Research University Higher School of Economics.}}
}
\date{}

\maketitle

\begin{abstract}
We study reciprocity laws involving complex line bundles on fibrations in oriented circles. In particularly, we prove the following reciprocity law.
Let $B$ be a complex manifold and $\pi_i : M_i \to B$ be a fibration in oriented circles, where $i$ runs through a finite set. Let  $L_i$ and $N_i$ be complex line bundles on every $M_i$. The reciprocity law  states that the sum of all $(\pi_i)_* \left(c_1(L_i)  \cup c_1(N_i) \right)$, where $(\pi_i)_*$ is the Gysin map and $c_1$ is the first Chern class, equals zero in $H^3(B, {\mathbb Z})$ when the disjoint union of all $M_i$ is embedded into a holomorphic family of compact Riemann surfaces over the base $B$ such that in every fiber of this family the disjoint union of the embedded circles is the boundary of an embedded compact Riemann surface with boundary, and all $L_i$ and all $N_i$ are restrictions of   holomorphic line bundles on this family.
\end{abstract}

\section{Introduction}

What are the reciprocity laws? Usually the reciprocity laws come from number theory or arithmetic geometry and relate the local and the global class field theory, see e.g. \cite{Se}.

But the reciprocity laws are valid not only for algebraic varieties over finite fields. For example, the simplest well-known reciprocity law is the following.

Let  $X$ be a compact Riemann surface, and
$\omega \in \Omega^1_{{\C}(X)/{\C}} $ be a meromorphic differential $1$-form on $X$. Then the following sum contains only a finite number non-zero terms and
\begin{equation}   \label{diff-form}
\sum_{x \in X}  \, \res\nolimits_x \omega  =0   \, \mbox{.}
\end{equation}

We see that the reciprocity laws connect the local and the global information on a manifold or a variety.

But we can look on equality~\eqref{diff-form} from another point of view. Fix a finite number of points $x_1, \ldots, x_n$ on $X$ and a finite number of complex numbers $a_1, \ldots, a_n$. Suppose that there is a meromorphic
differential $1$-form $\omega$ on $X$ such that $\omega$ is holomorphic on the non-compact Riemann surface  $X \setminus \{x_1, \ldots, x_n  \}$ and for any $1 \le i \le n$ the residue $\res_{x_i} \omega = a_i$. Then
the following equality is satisfied:
\begin{equation}   \label{sum_a}
\sum_{1 \le i \le n} a_i = 0   \, \mbox{.}
\end{equation}

This point of view is also explicit in the more difficult Deligne reciprocity law, where a compact Riemann surface with boundary is considered, and the local invariant is attached  to a  pair of $\C^*$-valued $C^{\infty}$-functions on every connected component of the boundary, see more in Section~\ref{Deligne}  below.

\medskip

Now for any $1 \le i \le n $ let $\pi_i :  M_i \to B$
be a fibration in oriented circles, where $M_i$ and $B$ are  (finite dimensional) $C^{\infty}$-manifolds,   see also Section~\ref{fibr} below. Let $L_i$ and $N_i$ be  complex line bundles on $M_i$. (We do not assume at the moment that $B$ is a complex manifold.) Consider an element
\begin{equation}   \label{elem}
(\pi_i)_* \left( c_1(L_i)  \cup c_1(N_i)   \right)   \,  \in \, H^3(B, \dz)   \, \mbox{,}
\end{equation}
where $c_1$ is the first Chern class with values in $H^2(M_i  , \dz)$ and $$(\pi_i)_* \,  :   \,  H^4(M_i, \dz)   \lrto H^3(B, \dz)$$ is the Gysin map.

Similarly to equality~\eqref{sum_a} we are interested in when the sum
\begin{equation}  \label{sumb}
\sum_{1 \le i \le n}  (\pi_i)_* \left( c_1(L_i)  \cup c_1(N_i)   \right)
\end{equation}
equals zero depending on possible embedding of
$ \bigsqcup_{1 \le i \le n} M_i $
and extensions of all $L_i$ and $N_i$ to a family of Riemann surfaces (with boundary or without boundary) over the base $B$ with some conditions.

In definition~\ref{star}  we formulate some condition $(\bigstar)$   and in Theorem~\ref{th-1} we prove that if complex line bundles on a family over $B$ satisfies this condition then  the sum~\eqref{sumb} equals zero. We call this property the {\em relative analytic reciprocity law}.

Further, in Theorem~\ref{fam} we prove that holomorphic line bundles on  holomorphic families of compact Riemann surfaces (smooth proper holomorphic morphisms of relative dimension $1$ between complex manifolds) give the condition $(\bigstar)$. See the more precise statement in this theorem.

Finally, as corollary of previous theorems,  in Theorem~\ref{Res_laws} we prove that if $B$ is a complex manifold and $\bigsqcup_{1 \le i \le n} M_i $
  is embedded into a  holomorphic family of compact Riemann surfaces over the base $B$ such that in every fiber of this family the disjoint union of the embedded circles is the boundary of an embedded compact Riemann surface with boundary, and all $L_i$ and all $N_i$ are restrictions of   holomorphic line bundles on this family, then the sum~\eqref{sumb} equals zero.

Surprisingly that the pure topological question on the  equality of the sum~\eqref{sumb} to zero depends on the embedding into holomorphic families of compact Riemann surfaces and extension of linear bundles to holomorphic linear bundles on these families.

We also note that we use in the proofs the Deligne reciprocity law.

Besides, in Remark~\ref{rem-last}  we connect an element~\eqref{elem} from the group $H^3(B, \dz)$ (when $n=1$ and $L_1 =N_1$) with the determinant gerbe and the topological Riemann-Roch theorem for circle fibrations from~\cite{Osip25} and \cite{BKTV}.

The previous results also extend to pullbacks
of fibrations in oriented circle, see Remark~\ref{last}.

In conclusion, we note that a pure topological element~\eqref{elem} has an analog in algebraic geometry and is called there the Deligne bracket (or the Deligne pairing), see~\cite{D1} and~\cite[Introduction]{O2}.

 \medskip

 The paper is organized as follows.

 In Section~\ref{Deligne} we recall on the Deligne reciprocity law.

 In Section~\ref{fibr} we give general definitions on fibrations in oriented circles and the Gysin map.

 In Section~\ref{rel} we introduce the condition~$(\bigstar)$ and prove the general relative analytic reciprocity law using this condition.

 In Section~\ref{hol} we consider  holomorphic families of compact Riemann surfaces and prove the recirpocity law using these families.

\section{Deligne reciprocity law}  \label{Deligne}

Recall the Deligne analytic reciprocity law from~\cite{D2}.

Let $\C^* = \C \setminus 0$.
Let $C^{\infty}(S^1, \C^*)$ be the group of $C^{\infty}$-functions from
the circle $S^1 $  to~$\C^*$. We fix an orientation on $S^1$.
There is a bumultiplicative and antisymmetric pairing introduced by A.~A.~Beilinson and P.~Deligne (see~\cite{Be} and~\cite[\S~2.7]{D2})
\begin{gather*}
{\mathbb T} \; \, : \;  \, C^{\infty}(S^1, \C^*) \times C^{\infty}(S^1, \C^*) \lrto \C^*  \, \mbox{,}  \\
{\mathbb T} (f,g) = \exp \left( \frac{1}{2 \pi i} \int_{x_0}^{x_0}  \log f \,  \frac{dg}{g} \right) g(x_0)^{-\nu(f)}  \, \mbox{,}
\end{gather*}
where the integer
\begin{equation*}
\nu(f) = \frac{1}{2 \pi i} \oint_{S^1} \frac{df}{f}
\end{equation*}
is the winding number of $f$,
$x_0$ is any point on $S^1$, $\log f $ is any branch of the logarithm of $f$ on $S^1 \setminus x_0$, and the integral is on $S^1$ from $x_0$ to $x_0$ in the positive direction of orientation.
The complex number ${\mathbb T}(f,g)$
does not depend on the choice of $x_0$ and  the choice of the branch $\log f$ of the logarithm of $f$.

The pairing $\mathbb T$  is related with the $\cup$-product in Deligne cohomology of $C^{\infty}$-manifolds, see, for example, \cite[\S~8.1, \S~10.1]{Osip25}.

\begin{nt}  \em
Since the pairing $\mathbb T$  is bimultiplicative, it factors through the following maps:
$$
 C^{\infty}(S^1, \C^*) \times C^{\infty}(S^1, \C^*) \lrto   C^{\infty}(S^1, \C^*) \otimes_{\dz} C^{\infty}(S^1, \C^*)  \lrto \C^*  \, \mbox{.}
$$
We will denote the induced homomorphism from $C^{\infty}(S^1, \C^*) \otimes_{\dz} C^{\infty}(S^1, \C^*)$ to $\C^*$ also by the letter $\mathbb T$.
\end{nt}

\begin{nt} \em There is the formal, purely algebraic analog of pairing~$\mathbb T$, called the  Contou-Carr\`{e}re symbol, see~\cite[\S~2.9]{D2},   \cite{CC1}, \cite[\S~2]{OZ},
and there is also  the higher-dimensional
 generalizations of the Contou-Carr\`{e}re symbol, see~\cite{OZ,GO}.
\end{nt}

Let $\Sigma$ be a compact Riemann surface with boundary. This means that
$\Sigma$ has an atlas with charts which are open subsets in the closed upper half-plane $\mathbb H$ of $\C$, i.e. $${\mathbb H} = \left\{ z \in \C  \mid {\rm Im}( z) \ge 0 \right\} \, \mbox{.}$$
The transition maps between  the charts have to be holomorphic on the interior and of class $C^{\infty}$ on the boundary. We note that by the Schwarz reflection principle the transition maps will be  analytic on the boundary, and  they give the prolongation of $\Sigma$ to the doubled Riemann surface.

 Consider a decomposition
$$\partial \Sigma = \bigsqcup_{1 \le i \le n}  \gamma_i \, \mbox{,}$$ where every $\gamma_i$ is $S^1$ with the induced orientation from $\Sigma$.

\begin{nt}  \em
For the sake of convenience of presentation in the article we will not assume that a Riemann surface or a compact Riemann surface with boundary is connected, but we will assume that they have a finite number of connected components.
\end{nt}

We recall the following statement from~\cite[Prop.~4.1]{D2}.

\begin{Th}[Analytic reciprocity law]  \label{Del-res}
 Let $F$ and $G$ be two $\C^*$-valued  $C^{\infty}$-functions on a compact Riemann surface $\Sigma$ with boundary  such that they are holomorphic on the interior $\Sigma \setminus \partial \Sigma$ of $\Sigma$.
Let  the $C^{\infty}$-functions $f$ and $g $ be the restrictions of the functions $F$ and $G$ to $\partial \Sigma$ correspondingly. Let ${\mathbb T}_{\gamma_i}$ be the pairing $\mathbb T$ with respect to $\gamma_i = S^1$, where
$1 \le i \le n$. Then there is an equality
$$
\prod_{1 \le i \le n} {\mathbb T}_{\gamma_i} (f , g) =1  \, \mbox{,}
$$
where  for every $1  \le i \le n$ one restricts $f$ and $g$ to $\gamma_i$ to calculate ${\mathbb T}_{\gamma_i} (f,g) $.
\end{Th}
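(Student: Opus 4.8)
The plan is to derive the reciprocity from a single application of Stokes' theorem on $\Sigma$, the only global input being that a holomorphic $2$-form on a complex curve vanishes. First I would observe that on the interior $U=\Sigma\setminus\partial\Sigma$ both $F$ and $G$ are holomorphic and nowhere zero, so $\frac{dF}{F}$ and $\frac{dG}{G}$ are holomorphic $(1,0)$-forms; since $\dim_{\mathbb{C}}U=1$ their wedge is a $(2,0)$-form on a curve and therefore
\[
\frac{dF}{F}\wedge\frac{dG}{G}=0\qquad\text{on }U.
\]
Consequently, on any region carrying a single-valued branch of $\log F$, the $1$-form $\log F\cdot\frac{dG}{G}$ is closed, since $d\!\left(\log F\cdot\frac{dG}{G}\right)=\frac{dF}{F}\wedge\frac{dG}{G}+\log F\cdot d\!\left(\frac{dG}{G}\right)=0$, using that $\frac{dG}{G}$ is itself closed.

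Next I would fix a system of cuts turning $\Sigma$ into a fundamental polygon: cut along $2p$ handle loops $a_1,b_1,\dots,a_p,b_p$ and along $n-1$ arcs joining the boundary circles $\gamma_1,\dots,\gamma_n$, so that the complement $P$ is simply connected. I then fix a branch of $\log F$ on $P$ and take the base point $x_0^{(i)}$ to be where the connecting arc meets $\gamma_i$. Applying Stokes to the closed form above yields
\[
0=\int_{P}d\!\left(\log F\cdot\frac{dG}{G}\right)=\int_{\partial P}\log F\cdot\frac{dG}{G},
\]
and I would read off $\partial P$ as the boundary circles together with each cut traversed twice in opposite directions. Across a cut the two branches of $\log F$ differ by $2\pi i$ times an integer (the period of $\frac{1}{2\pi i}\frac{dF}{F}$ across it), while $\frac{dG}{G}$ agrees on the two sides.

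The contributions then sort into three types. The circles $\gamma_i$ give $\oint_{\gamma_i}\log F\cdot\frac{dG}{G}$, which is exactly the exponent $\int_{x_0^{(i)}}^{x_0^{(i)}}\log f\,\frac{dg}{g}$ appearing in $\mathbb{T}_{\gamma_i}(f,g)$. The handle loops $a_j,b_j$ are \emph{closed} curves, so for them $\int\frac{dG}{G}\in 2\pi i\mathbb{Z}$; each such cut therefore contributes an element of $(2\pi i)^2\mathbb{Z}$, which is harmless after dividing by $2\pi i$ and exponentiating. The connecting arcs are \emph{not} closed, and each contributes $2\pi i$ times an integer times the change of a branch of $\log G$ along the arc, i.e.\ a term of the form $2\pi i\,\nu(f|_{\gamma_i})\log g(x_0^{(i)})$ (with sign fixed by the orientation conventions); these are precisely the correction factors $g(x_0^{(i)})^{-\nu(f)}$ built into the local symbols. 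Dividing the Stokes identity by $2\pi i$ and exponentiating, everything collapses to $\prod_{i}\mathbb{T}_{\gamma_i}(f,g)=1$.

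The main obstacle is exactly the orientation and base-point bookkeeping of the previous paragraph: choosing the cut system and the induced orientations so that the connecting-arc terms reproduce $g(x_0^{(i)})^{-\nu(f)}$ with the correct sign, checking independence of the branch of $\log F$ and of the $x_0^{(i)}$ (which follows from the corresponding independence of each $\mathbb{T}_{\gamma_i}$), and making the argument uniform in the genus and the number of boundary components, including the case where $\Sigma$ is disconnected. This is in essence the content of Deligne's Proposition~4.1 in~\cite{D2}.
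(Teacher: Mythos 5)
Your argument is correct in outline, but it takes a genuinely different route from the paper's. The paper proves Theorem~\ref{Del-res} (in the remark following it) by packaging the pair $(F,G)$ into the Deligne-cohomology cup product: a complex line bundle $F\cup G$ on $\Sigma$ with connection, whose monodromy along $\gamma_i$ is ${\mathbb T}_{\gamma_i}(f,g)$ and whose curvature is $\frac{-1}{2\pi i}\,\frac{dF}{F}\wedge\frac{dG}{G}$; holomorphy on the interior kills the curvature (the same $(2,0)$-form vanishing you start from), so the monodromy representation factors through $H_1(\Sigma,\dz)$, and the conclusion is immediate from $[\gamma_1+\cdots+\gamma_n]=[\partial\Sigma]=0$ in $H_1(\Sigma,\dz)$. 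Your Stokes computation on a cut surface is essentially an unpacking of that flat-bundle argument: everything the paper absorbs into the construction of $F\cup G$ (the identification of the monodromy with $\mathbb T$, i.e.\ all base-point and branch bookkeeping) is exactly what you must carry out by hand along the cuts. Your version buys elementarity and self-containedness --- no Deligne-cohomology machinery --- at the price of the sign and branch accounting you rightly flag; the paper's version buys brevity and a structural formulation (flat connection, homology) that matches how $\mathbb T$ is reused in Section~\ref{rel}. Two details your sketch leaves implicit and that are genuinely needed: first, to convert the connecting-arc contributions, each of the shape $2\pi i\, m_j\bigl(\log g(x_0^{(j+1)})-\log g(x_0^{(j)})\bigr)$ with $m_j$ an accumulated winding number, into the correction factors $\prod_i g(x_0^{(i)})^{-\nu(f|_{\gamma_i})}$, you must telescope using $\sum_i \nu(f|_{\gamma_i})=0$ on each connected component of $\Sigma$ --- itself a one-line Stokes application to $\frac{dF}{F}$ (the analogue of $[\partial\Sigma]=0$ in the paper's proof); second, your cut system should be chosen so that the complement really is simply connected, which requires the handle curves to be joined to the boundary (e.g.\ replaced by $2p$ properly embedded arcs) rather than taken as disjoint closed loops. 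With those two points supplied, your plan goes through and recovers the statement.
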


\begin{nt} \em
Since \cite[Prop.~4.1]{D2} states more than Theorem~\ref{Del-res}, we will briefly recall here the argument for the proof of Theorem~\ref{Del-res}.
By $F$ and $G$ we can associate the complex  line bundle $F \cup G$ on $\Sigma$ with connection, see, e.g. \cite[\S~10.1]{Osip25} (this construction corresponds  to the multiplication $\cup$ between the Deligne cohomology groups $H^1_D(\Sigma, {\mathbb Z}(1) )$). The monodromy of this connection along $\gamma_i$ is ${\mathbb T}_{\gamma_i} (f , g)$.
Besides, the curvature of this connection is
$$\frac{-1}{2 \pi i} \, \frac{dF}{F}  \wedge \frac{dG}{G}  \, \mbox{.}$$
Since the functions $F$ and $G$ are holomorphic on the interior  of $\Sigma$, this curvature is zero, i.e. the connection is flat. Therefore $F \cup G$ corresponds to a homomorphism from $\pi_1(\Sigma)$ to $\C^*$ that factors through
$$H_1(\Sigma, \dz) = \pi_1(\Sigma) / [ \pi_1(\Sigma), \pi_1(\Sigma) ] $$
Now it is enough to note that the $1$-cycle $\gamma = \gamma_1 + \ldots + \gamma_n$ equals zero in $H_1(\Sigma, \dz)$,  since $\gamma = \partial \Sigma$.
\end{nt}

\begin{nt} \em
From Theorem~\ref{Del-res} it follows the Weil reciprocity law for a compact Riemann surface or, in other words, a smooth projective algebraic curve over $\C$ (see~\cite[ch.~III, \S~4]{Se}), since ${\mathbb T}_{\sigma}(h_1,h_2)$ coincides with the  tame symbol of meromorphic fucntions $h_1$ and $h_2$ at a point $p$,
where $\sigma$ is a small circle around $p$, see~\cite{Be}, \cite[\S~2]{D2},  and the tame symbol
$$
(h_1, h_2)_p = (-1)^{\nu_p(h_1)\nu_p(h_2)} \left[h_1^{\nu_p(h_2)}/ h_2^{\nu_p(h_1)}  \right](p)  \, \mbox{,}
$$
where $\nu_p(h_i)$ is the order of zero (or the number  opposite to the order of pole) of $h_i$
at~$p$.
\end{nt}

\medskip

We will use the pairing $\mathbb T$ and the reciprocity law from Theorem~\ref{Del-res} in Section~\ref{rel} of this article.

\section{Fibration in oriented circles}  \label{fibr}

Let $B$ be a  finite dimensional $C^{\infty}$-manifold. Let $S^1$ be the circle.

Let $\pi : M \to B$ be a fibration in oriented circles. This means that $\pi$ is a locally trivial fibration (or, equivalently,  by Ehresmann's lemma,   a proper surjective submersion between $C^{\infty}$-manifolds)
with $S^1$ fibers
which has a collection of transitions
 functions with value in the group ${\rm Diff}^+ (S^1)$  of orientation preserving $\C^{\infty}$-diffeomorphisms of~$S^1$.

 \medskip

 For any integer $p \ge 1$ there is the Gysin map
 $$
 \pi_* \; : \: H^p(M, \dz)  \lrto H^{p-1}(M, \dz)  \, \mbox{,}
 $$
 which follows from the Leray spectral sequence
 $$ E^{pq}_2 = H^p (B, R^q \pi_* \dz) \Rightarrow   H^{p+q}(M , \dz)  \, \mbox{,}$$
 and $\pi_*$ is the composition of the following natural maps from this spectral sequence:
$$
\pi_* \; : \; H^p(M, \dz)  \lrto E_{\infty}^{p-1,1}  \subset E_2^{p-1,1} = H^{p-1}(B, R^1 \pi_* \dz ) \lrto H^{p-1}(B, \dz)  \, \mbox{.}
$$
We used here that the sheaf $R^1 \pi_* \dz$ is canonically isomorphic to the constant  sheaf $\dz$ after the fixing of orientation of the fiber $S^1$.

\begin{nt}  \em
After extension of coefficients from the ring $\dz$ to the field $\dr$, the Gysin map
$$
\pi_* \; : \;  H^p(M, \dr)  \lrto H^{p-1}(B, \dr)
$$
is the integration of differential forms along the fiber $S^1$ after the isomorphism of singular cohomology with real coefficients and  the de Rham cohomology.
\end{nt}

\medskip

By $\oo_M$ and $\oo_M^*$ denote the sheaves of  $\C$-valued and  $\C^*$-valued $C^{\infty}$-functions on $M$ correspondingly. By $\oo_B$ and $\oo_B^*$ denote the analogous sheaves on $B$.

Let $L$ be a complex  line bundle on $M$.  The isomorphism class of $L$ corresponds to the element from $H^1(M, \oo_M^*)$.
There is the first Chern class of $L$
$$c_1(L)  \in H^2(M, \dz)  $$
which is the image of the class of  $L$ under
the isomorphism
$$
H^1(M, \oo_M^*)  \lrto H^2(M, \dz)
$$
that follows from the exponential sheaf sequence
\begin{equation}  \label{exp-sheaf}
0 \lrto 2 \pi i \dz  \lrto \oo_M   \xlrto{\exp}   \oo_M^*  \lrto 1  \, \mbox{.}
\end{equation}

Let $L$ and $N$ be  complex  line bundles on $M$. Using the $\cup$-product between the cohomology groups, we have the following expression:
$$
c_1(L)  \cup c_1(N)  \, \in   \,  H^4(M, \dz)  \, \mbox{.}
$$

\section{Relative analytic reciprocity law}  \label{rel}

Let $K$ be an $n$-dimensional  $C^{\infty}$-manifold with boundary.
Recall that a function $\chi$ from  $K$ to ${\mathbb R}^m$ is called a $C^{\infty}$-function if for every point $q \in K$
 there is a chart $(U, \psi)$, where $q \in U  \subset K$  and $\psi$  is a homeomorphism between the open subset $U$  and an open subset of  the closed upper half-space ${\mathbb R}^{n-1}  \times {\mathbb R}_{\ge 0}$  of ${\mathbb R}^n$,   such that the function $\chi \circ \psi^{-1}$
extends to a smooth function on an open subset of ${\mathbb R}^n$.

\begin{defin}  \label{fam-rim}
Let $P$ be a finite dimensional  $C^{\infty}$-manifold with boundary, and  $B$ be a finite dimensional $C^{\infty}$-manifold.
A surjective $C^{\infty}$-map $\tau$ between $P$ and  $B $ is called a $C^{\infty}$-family of compact Riemann surfaces with boundary if the following conditions are satisfied.
\begin{enumerate}
\item  For every point $x \in B$ the fiber $\tau^{-1}(x)$ is a compact Riemann surface with boundary
 and  the intersection $\partial P \cap \tau^{-1}(x)$ is the boundary of $\tau^{-1}(x)$.
\item Consider a decomposition
$$
\partial P  = \bigsqcup_{1 \le i \le n} M_i  \qquad \mbox{and} \qquad
\pi_i = \tau \mid_{M_i}  \, \mbox{,}
$$
where every $M_i$ is a connected manifold. Then $\pi_i : M_i \to B$ is a fibration in oriented circles for every $1 \le i \le n$.
\end{enumerate}
\end{defin}

Note that in this definition we do not demand any relation between the complex structures of the various fibers of $\tau$.

\begin{defin}
Let $\tau :  P   \to B $ be  a $C^{\infty}$-family of compact Riemann surfaces with boundary. A   complex  line $C^{\infty}$-bundle $E$ on $P$ is called a $C^{\infty}$-holomorphic line bundle,
if for every point $x \in B$ the line bundle $E \mid_{\tau^{-1}(x)}$ is holomorphic on the interior of  $\tau^{-1}(x)$.
\end{defin}

\begin{defin}  \label{star}
Let $\tau :  P   \to B $ be  a $C^{\infty}$-family of compact Riemann surfaces with boundary. Let $E$ be a $C^{\infty}$-holomorphic line bundle on $P$. We will say that $E$ satisfies the condition $(\bigstar)$,
if for every point $x  \in B$ there is an open neighbourhood $V$ of $x$ in $B$ and a non-vanishing $C^{\infty}$-section
$$s \;  :  \; \partial(\tau^{-1}(V)) \lrto  E \mid_{\partial(\tau^{-1}(V))}$$
  such that
  for any point $v \in V$ the restriction of  $s$ to ${\partial(\tau^{-1}(v)) }$ can be extended to a non-vanishing section  $$\tau^{-1}(v)  \lrto E \mid_{\tau^{-1}(v)}$$ which is holomorphic on the interior of  $\tau^{-1}(v)$.
\end{defin}

\begin{nt}  \em
The condition $(\bigstar)$ from Definition~\ref{star} is the ``fusion'' of two natural properties. The first property is that
if $\pi : M  \to B$ is a fibration in oriented circles and $L$ is a complex line bundle on $M$, then for every point $x \in B$ there is  an open neighbourhood $V$ of $x$ in $B$ such that $L \mid_{ \pi^{-1}(V)}$ is the trivial line bundle. The second property is that any holomorphic  line bundle on a non-compact connected Riemann surface is trivial, \linebreak see~{\cite[\S~30]{For}}.

Further, in Section~\ref{hol} we give natural examples when the condition $(\bigstar)$ is satisfied.
\end{nt}

\medskip

We give now the relative reciprocity law.
\begin{Th}[Relative analytic  reciprocity law]  \label{th-1}
Let $\tau : P \to B$  be a
$C^{\infty}$-family of compact Riemann surfaces with boundary.
Let $$\partial P = \bigsqcup_{1 \le i \le n} M_i \, \mbox{,}$$
where every $M_i$ is a connected manifold.
For any  $1 \le i \le n$ let
 $\pi_i : M_i \to B$, where  $\pi_i$ is the restriction of $\tau$ to $M_i$, be the corresponding fibration in oriented circles.
Let $E$ and $H$ be $C^{\infty}$-holomorphic line bundles on $P$ that satisfy condition $(\bigstar)$ from Definition~\ref{star}. For any $1 \le i \le n$  let $$L_i = E \mid_{M_i} \qquad  \mbox{and} \qquad  N_i = H \mid_{M_i} \, \mbox{.}$$
Then in the group $H^3(B, \dz)$ we have
\begin{equation}  \label{ident}
\sum_{1 \le i \le n} (\pi_i)_* \, (c_1(L_i) \cup c_1(N_i)) = 0  \, \mbox{.}
\end{equation}
\end{Th}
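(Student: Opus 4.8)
The plan is to realise each summand $(\pi_i)_*(c_1(L_i)\cup c_1(N_i))$ as the image, under the exponential sequence~\eqref{exp-sheaf}, of an explicit \v{C}ech $2$-cocycle on $B$ whose values are $\mathbb{T}$-pairings of transition functions along the fibre circles, and then to trivialise the total cocycle fibrewise by the Deligne reciprocity law, Theorem~\ref{Del-res}. Since the sheaf $\oo_B$ of $C^\infty$ $\C$-valued functions is fine, the sequence~\eqref{exp-sheaf} induces an isomorphism $H^2(B,\oo_B^*)\cong H^3(B,\dz)$; it therefore suffices to exhibit the relevant class already in $H^2(B,\oo_B^*)$ and to prove that its total over $i$ vanishes there.

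First I would fix the local data. Applying condition $(\bigstar)$ to $E$ and to $H$, choose a cover $\{V_\alpha\}$ of $B$ together with nowhere-vanishing sections $s_\alpha$ of $E$ and $t_\alpha$ of $H$ on $\partial(\tau^{-1}(V_\alpha))$ whose restriction to each fibre extends to a nowhere-vanishing section holomorphic on the interior of that fibre. Restricting to $M_i$ trivialises $L_i$ and $N_i$ over $\pi_i^{-1}(V_\alpha)$, and on overlaps yields transition functions $g^{(i)}_{\alpha\beta},h^{(i)}_{\alpha\beta}\colon\pi_i^{-1}(V_{\alpha\beta})\to\C^*$ representing $c_1(L_i)$ and $c_1(N_i)$ via~\eqref{exp-sheaf}. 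The decisive consequence of $(\bigstar)$ is that, for each $v\in V_{\alpha\beta}$, the function $g^{(i)}_{\alpha\beta}$ is the boundary value of the quotient of the two fibrewise holomorphic extensions of $s_\alpha$ and $s_\beta$, and similarly for $h^{(i)}_{\alpha\beta}$; hence both extend to $\C^*$-valued functions on the whole fibre $\tau^{-1}(v)$ that are holomorphic on its interior.

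Next I would compute the Gysin pushforward at the level of cochains. Lifting $c_1(L_i)$ and $c_1(N_i)$ to the Deligne classes of $L_i$ and $N_i$ in $H^2_D(M_i,\dz(1))$, forming their cup product in $H^4_D(M_i,\dz(2))$ and integrating over the fibre $S^1$ produces a class in $H^3_D(B,\dz(1))=H^2(B,\oo_B^*)$ whose image in $H^3(B,\dz)$ equals $(\pi_i)_*(c_1(L_i)\cup c_1(N_i))$, because fibre integration commutes with the passage to ordinary cohomology. Using the identification of $\mathbb{T}$ with the fibre integral of the Deligne cup product (see~\cite[\S 8.1, \S 10.1]{Osip25}), this class is represented by the $\oo_B^*$-valued \v{C}ech $2$-cocycle $\theta^{(i)}_{\alpha\beta\gamma}=\mathbb{T}_{\gamma_i}(g^{(i)}_{\alpha\beta},h^{(i)}_{\beta\gamma})$, the pairing being taken fibrewise on $\gamma_i=\pi_i^{-1}(v)$. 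For fixed $v\in V_{\alpha\beta\gamma}$ the fibre $\tau^{-1}(v)$ is a compact Riemann surface with boundary $\bigsqcup_i\gamma_i$, and by the previous paragraph $g_{\alpha\beta}$ and $h_{\beta\gamma}$ are the boundary restrictions of $\C^*$-valued functions on $\tau^{-1}(v)$ holomorphic on its interior; Theorem~\ref{Del-res} then gives $\prod_{1\le i\le n}\mathbb{T}_{\gamma_i}(g_{\alpha\beta},h_{\beta\gamma})=1$, that is $\prod_i\theta^{(i)}_{\alpha\beta\gamma}\equiv 1$. Therefore $\sum_i[\theta^{(i)}]=0$ in $H^2(B,\oo_B^*)$, and the isomorphism of the first paragraph yields~\eqref{ident}.

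The main obstacle is the cochain-level identification used in the third paragraph: one must verify that, over the chosen cover, the Gysin map applied to the cup product of two Chern classes is computed precisely by the fibrewise $\mathbb{T}$-pairing of the transition functions. This amounts to setting up fibre integration in \v{C}ech--Deligne cohomology and controlling the choices of logarithm branches and the winding-number contributions entering the definition of $\mathbb{T}$, so that the asserted cocycle identity holds exactly and Theorem~\ref{Del-res} can be applied triple-by-triple; once this is in place, the remaining steps are formal.
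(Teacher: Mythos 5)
Your proposal is correct and follows essentially the same route as the paper: the paper likewise reduces to $H^2(B,\oo_B^*)$ via the exponential sequence, uses condition $(\bigstar)$ to obtain transition functions extending fibrewise to $\C^*$-valued functions holomorphic on the interior of each fibre (packaged there as a lift of the classes of $\prod_i L_i$ and $\prod_i N_i$ to a subsheaf $\oo^*_{\rm hol}\subset\prod_i(\pi_i)_*\oo^*_{M_i}$), and kills the total class by applying Theorem~\ref{Del-res} fibrewise through the sheaf map $\prod_i{\mathbb T}$. The cochain-level identification you flag as the main obstacle --- that $\pi_*\circ\cup$ on Chern classes is computed by the fibrewise ${\mathbb T}$-pairing of transition functions --- is precisely what the paper invokes as \cite[Theorem~9]{Osip25} (the coincidence of the compositions~\eqref{first-seq} and~\eqref{second-sec}), so your argument matches the paper's proof with that step supplied by citation rather than verified from scratch.
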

\begin{proof}
Let $\pi : M  \to B$ be a fibration in oriented circles.

Since $R^q \pi_* \oo_M^* = \{1\}$ for any integer $q \ge 1$, from the  Leray spectral sequence for the  sheaf $\oo_M^*$ we have for any integer $p \ge 0$ the isomorphism
$$
{H^p(M, \oo_M^*) \lrto H^{p}(B, \pi_* \oo_M^* )}  \, \mbox{.}
$$

From the exponential sheaf sequence~\eqref{exp-sheaf} and $\cup$-product in singular cohomology we have the following composition of maps:
\begin{multline}  \label{first-seq}
H^1(M, \oo_M^*)  \times H^1(M, \oo_M^*) \lrto H^{2}(M, \dz)  \times H^{2}(M, \dz) \stackrel{\cup}{\lrto}   H^{4}(M, \dz)  \xlrto{\pi_*}   \\
 \xlrto{\pi_*}   H^{3}(B, \dz)  \, \mbox{.}
\end{multline}

From~\cite[Theorem~9]{Osip25} it follows that the composition of maps~\eqref{first-seq} coincides with the following composition of maps
\begin{multline}   \label{second-sec}
H^1(M, \oo_M^*)  \times H^1(M, \oo_M^*) \lrto  H^1(B, \pi_* \oo_M^*)  \times H^1(B,  \pi_* \oo_M^*)  \stackrel{\cup}{\lrto}   \\
\stackrel{\cup}{\lrto}
 H^{2}(B, \pi_* \oo_M^*  \otimes_{\dz} \pi_* \oo_M^*)   \xlrto{{\mathbb T}}   H^{2}(B, \oo_B^*)  \lrto H^{3}(B, \dz)   \, \mbox{.}
\end{multline}
Here the map ${{\mathbb T}} $ is induced by the corresponding map of sheaves, where on every fiber $\pi^{-1}(x)= S^1$ ($x \in B$) the map ${\mathbb T}$  is applied. Besides, we used in the last map in~\eqref{second-sec} the exponential sheaf sequence on $B$.

Note that for $M = M_i$ and $\pi = \pi_i$, where $1 \le i \le n$, the composition of maps~\eqref{first-seq}   gives the $i$th summand in  the left hand side of equation~\eqref{ident}.

Consider the map of sheaves on $B$:
\begin{equation}  \label{compo}
\left( \prod_{1 \le i \le n }  (\pi_i)_*  \oo_{M_i}^* \right)  \;   \otimes_{\dz}  \; \left( \prod_{1 \le i \le n }  (\pi_i)_*  \oo_{M_i}^* \right) \; \lrto \;
\prod_{1 \le i \le n }  (\pi_i)_*  \oo_{M_i}^*   \otimes_{\dz}  (\pi_i)_*  \oo_{M_i}^*   \xlrto{\prod\limits_{1 \le i \le n } {\mathbb T}} \oo_B^*  \, \mbox{.}
\end{equation}
Here the map $\prod\limits_{1 \le i \le n } {{\mathbb T}} $ is
is the product of maps $\mathbb T$, where for every  $1 \le i \le n$  the map $\mathbb T$ is applied to functions on a fiber $\pi_i^{-1}(x)= S^1$  for each $x \in B$.

Consider a subsheaf of groups on $B$:
$$
\oo_{\rm hol}^*   \;  \subset  \; \prod_{1 \le i \le n }  (\pi_i)_*  \oo_{M_i}^*  \, \mbox{,}
$$
where on every open subset  $U \subset B$ an element
$$(f_1, \ldots, f_n)  \in   \prod_{1 \le i \le n } H^0(U,  (\pi_i)_*  \oo_{M_i}^*) = \prod_{1 \le i \le n }  H^0(\pi_i^{-1}(U), \oo_{\pi_i^{-1}(U)}^* )  $$
belongs to $H^0(U, \oo_{\rm hol}^*)$ if for every $x  \in U $ there is a $\C^*$-valued $C^{\infty}$-function $\psi$ on $\tau^{-1}(x)$
such that $\psi$ is holomorphic on the interior of $\tau^{-1}(x)$ and for any  $1 \le i \le n$ the function $\psi$ coincides with $f_i$ after restriction to $\pi_i^{-1}(x)$.

From Theorem~\ref{Del-res} (the analytic reciprocity law) we immediately obtain that the composition of maps~\eqref{compo} restricted to the subsheaf $\oo_{\rm hol}^*  \otimes_{\dz} \oo_{\rm hol}^*$ is trivial, i.e. it equals $1$.

Besides,
\begin{gather*}
\prod_{1 \le i \le n }  H^1 \left(B, (\pi_i)_* \oo_{M_i}^* \right)  = H^1 \left(B, \prod_{1 \le i \le n } (\pi_i)_* \oo_{M_i}^* \right)   \, \mbox{,}
\\
\prod_{1 \le i \le n } H^2 \left(B, (\pi_i)_* \oo_{M_i}^*   \otimes_{\dz} (\pi_i)_* \oo_{M_i}^* \right)  =
 H^2 \left(B, \prod_{1 \le i \le n } (\pi_i)_* \oo_{M_i}^*  \otimes_{\dz} (\pi_i)_* \oo_{M_i}^* \right)  \, \mbox{.}
\end{gather*}

Now from condition $(\bigstar)$ we have that elements
$$
\prod_{1 \le i \le n } L_i   \qquad \mbox{and} \qquad  \prod_{1 \le i \le n } N_i
$$
from the group $H^1 \left(B, \prod\limits_{1 \le i \le n } (\pi_i)_* \oo_{M_i}^* \right)$ belong to the image of the group
$H^1(B, \oo_{\rm hol}^*)$. (It is easy to see by using the \v{C}ech cohomology, since the first \v{C}ech cohomology group of a sheaf always coincides with the first sheaf cohomology group.)

And the following composition of maps  is trivial:
\begin{multline*}
H^1(B, \oo_{\rm hol}^*)   \times  H^1(B, \oo_{\rm hol}^*)  \xlrto{\cup}  H^2(B, \oo_{\rm hol}^* \otimes_{\dz}  \oo_{\rm hol}^*) \lrto \\ \lrto  H^2 \left(B, \left(\prod\limits_{1 \le i \le n } (\pi_i)_* \oo_{M_i}^* \right)
\otimes_{\dz} \left( \prod\limits_{1 \le i \le n } (\pi_i)_* \oo_{M_i}^* \right) \right)
\lrto  \\ \lrto H^2 \left(B, \prod_{1 \le i \le n } (\pi_i)_* \oo_{M_i}^*  \otimes_{\dz} (\pi_i)_* \oo_{M_i}^* \right)
\xlrto{\prod\limits_{1 \le i \le n } {\mathbb T}}
H^2 \left( B, \oo_B^*  \right)  \, \mbox{.}
\end{multline*}

This finishes the proof.

\end{proof}

\section{Holomorphic families of compact Riemann surfaces}  \label{hol}

In this section we give examples when the condition $(\bigstar)$ from Definition~\ref{star} is satisfied. This condition was important for the relative reciprocity law in Theorem~\ref{th-1}.

Recall that a morphism $\phi$ between complex analytic spaces $X$ and $Y$ is called smooth  if for every point $x \in X $
one can find the following items: open neighbourhoods $U \subset X$ of $x$, $V \subset Y$ of $\phi(x)$ with $\phi(U) =V$, an open subset $Z \subset {\mathbb C}^k$ and an isomorphism
of complex analytic spaces $\psi : U \to Z \times V$ such that the diagram
 $$
\xymatrix{
U  \ar[rr]^{\psi}   \ar[dr]^{\phi}  &&  Z \times V  \ar[dl]_{\rm pr}  \\
&
V
}
$$
commutes, where $\rm pr$ is the corresponding projection.

When $k $ does not depend on $x  \in X$, we will say that the morphism $\phi$ is of {\em relative dimension $k$}.

\begin{nt}  \em
The morphism $\phi$ is smooth if and only if $\phi$ is flat and every fiber of $\phi$ is a complex manifold. Besides, if $Y$ is a complex manifold and $\phi : X \to Y$ is smooth, then $X$ is a complex manifold.
In addition, for a morphism $\varphi : X' \to Y' $ between connected complex manifolds the following conditions are equivalent:
\begin{enumerate}
\item $\varphi$ is flat;
\item $\varphi$ is open;
\item Every fiber of $\varphi$ is of pure dimension $\dim X' - \dim Y'$.
\end{enumerate}
\end{nt}
See, e.g., \cite[\S~2.18, \S~3.20-3.21]{Fisch} (but note that  in this book a smooth morphism is called a submersion).

\bigskip

Let $\tau :  P   \to B $ be  a $C^{\infty}$-family of compact Riemann surfaces with boundary, and  let $E$ be a $C^{\infty}$-holomorphic line bundle on $P$  (see section~\ref{rel}).
Let $Q$ be a holomorphic line bundle on a complex manifold $W$.

By a {\em morphism} $\rho : P \to W$ such that $\rho^* Q \simeq E$ we mean a $C^{\infty}$-map $\rho$ which is holomorphic on the interior of $\tau^{-1}(x)$ for every point $x \in B$ and such that there is an isomorphism $\rho^* Q \simeq E$ as line $C^{\infty}$-bundles which is an isomorphism of holomorphic line bundles after the restriction to the interior of $\tau^{-1}(x)$ for every point $x \in B$.

\medskip

The following theorem gives the sufficient condition
when the condition $(\bigstar)$ is satisfied. This is important for application  in Theorem~\ref{th-1}.

\begin{Th} \label{fam}
Let $\tau :  P   \to B $ be  a $C^{\infty}$-family of compact Riemann surfaces with boundary, and  let $E$ be a $C^{\infty}$-holomorphic line bundle on $P$.
Suppose that for any point $x \in B$ we have
\begin{itemize}
\item[1)]  an open  neighbourhood $\widetilde{V}$ of $x$ in $B$ and a $C^{\infty}$-map $\varpi$ from $\widetilde{V}$ to a complex connected manifold  $V$,
\item[2)] a  smooth proper morphism $\phi : U \to V$ of relative dimension $1$ between  complex manifolds,
\item[3)] a holomorphic line bundle $Q$ on $U$,
\item[4)] a morphism
 $\rho \, :  \, \tau^{-1}(\widetilde{V}) \to U$ such that $\rho^* Q \simeq E \mid_{\tau^{-1}(\widetilde{V})}$ and the diagram
 $$
\xymatrix{
\tau^{-1}(\widetilde{V})  \ar[rr]^{\rho}   \ar[d]^{\tau \mid_{\tau^{-1}(\widetilde{V})}}  && U \ar[d]_{\phi}  \\
\widetilde{V} \ar[rr]^{\varpi}  && V
}
$$
 commutes.
 \end{itemize}
Then $E$ satisfies the condition $(\bigstar)$.
\end{Th}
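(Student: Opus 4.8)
The plan is to deduce condition $(\bigstar)$ from a single, stronger statement: after shrinking $V$, the line bundle $Q$ is holomorphically trivial on a suitable open neighbourhood of the image $\rho(\tau^{-1}(V))$ in $U$. Indeed, suppose $\sigma$ is a nowhere-vanishing holomorphic section of $Q$ on an open set $U' \subset U$ containing $\rho(\tau^{-1}(V))$. Since $\rho^* Q \simeq E\mid_{\tau^{-1}(V)}$ as $C^\infty$-bundles and $\rho$ is holomorphic on the interior of every fibre $\tau^{-1}(v)$, the pullback $S := \rho^*\sigma$ is a nowhere-vanishing $C^\infty$-section of $E\mid_{\tau^{-1}(V)}$ whose restriction to each fibre $\tau^{-1}(v)$ is holomorphic on the interior. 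Setting $s := S\mid_{\partial(\tau^{-1}(V))}$ then produces at once the global boundary section required by Definition~\ref{star}, together with the fibrewise holomorphic nowhere-vanishing extension $S\mid_{\tau^{-1}(v)}$ of each $s\mid_{\partial(\tau^{-1}(v))}$. Thus everything reduces to constructing $\sigma$.

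To build $\sigma$ I would first shrink $V$ to a contractible Stein neighbourhood (a polydisc) of $x$. The key geometric point is that the image $\rho(\tau^{-1}(v))$ is a proper compact subset of the compact Riemann surface $\phi^{-1}(v)$ for every $v$: this holds because $\tau^{-1}(v)$ is a compact Riemann surface with (non-empty) boundary, so its image cannot exhaust the closed surface $\phi^{-1}(v)$. Consequently one can choose a relatively non-compact open neighbourhood $U' \supset \rho(\tau^{-1}(V))$ with $U' \cap \phi^{-1}(v)$ an open (hence Stein) Riemann surface for every $v$, and arrange $U'$ itself to be Stein --- for instance by taking $U' = U \setminus D$, where $D$ is a relative effective divisor meeting each fibre in the complement of the image, so that $\phi\mid_{U'} : U' \to V$ has affine fibres and Stein total space over the Stein base $V$. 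Since $U' \to V$ is a locally trivial $C^\infty$-fibration over the contractible base $V$, it retracts onto its fibre, which is an open Riemann surface homotopy equivalent to a wedge of circles, whence $H^2(U',\dz) = 0$. Now the exponential sheaf sequence
\[
0 \lrto 2\pi i\,\dz \lrto \oo_{U'} \xlrto{\exp} \oo_{U'}^* \lrto 1
\]
together with Cartan's Theorem~B ($H^1(U',\oo_{U'}) = H^2(U',\oo_{U'}) = 0$, as $U'$ is Stein) yields an isomorphism $H^1(U',\oo_{U'}^*) \xlrto{\sim} H^2(U',\dz) = 0$, so $Q\mid_{U'}$ is holomorphically trivial and the desired nowhere-vanishing section $\sigma$ exists.

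The main obstacle is the construction of the relatively non-compact Stein neighbourhood $U'$ of the compact set $\rho(\tau^{-1}(V))$: both that its fibres avoid a point (equivalently, that a fibrewise divisor $D$ can be chosen disjoint from the image) and that the resulting $U \setminus D$ is genuinely Stein over $V$ require care, since $\rho(\tau^{-1}(V))$ has full real dimension in $U$ and so cannot be avoided by a generic divisor --- one must use that each fibre image is a proper bordered subsurface and place $D$ in the complementary region. As a conceptual check that no further obstruction hides here, note that the required fibrewise extension can always be produced directly: extending $\tau^{-1}(v)$ across its boundary by Schwarz reflection to a slightly larger open Riemann surface and extending $\rho$ holomorphically there (as in Section~\ref{Deligne}), the bundle $E\mid_{\tau^{-1}(v)}$ becomes the restriction of a holomorphic line bundle on a non-compact connected Riemann surface, hence trivial by \cite[\S~30]{For}. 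This guarantees that the fibrewise nowhere-vanishing holomorphic sections exist; the content of the theorem is precisely that they can be chosen compatibly in a single global boundary section, which is exactly what the Stein argument delivers.
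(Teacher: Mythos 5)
Your overall route is the same as the paper's: delete from $U$ a fibrewise divisor avoiding $\rho(\tau^{-1}(V))$, show that $Q$ is holomorphically trivial on the complement via the exponential sequence together with Cartan's Theorem~B and $H^2(\,\cdot\,,\dz)=0$, and pull back a nowhere-vanishing section through $\rho$ to get the section demanded by $(\bigstar)$ --- that reduction, and the topological computation of $H^2$, match the paper. But there is a genuine gap exactly where you write ``for instance'': the claim that $U' = U\setminus D$ is Stein because $\phi\mid_{U'}$ has Stein (affine) fibres over a Stein base. This implication is false in general --- it is the Serre problem for Stein fibrations, for which Skoda and Demailly produced counterexamples (locally trivial bundles with fibre $\C^2$ over open subsets of $\C$ whose total space is not Stein); and the known positive results in fibre dimension one (Mok's theorem on the Serre problem for Riemann surfaces) apply to \emph{locally trivial holomorphic} fibre bundles, which $U\setminus H \to V$ is not, since the complex structure of the fibres varies. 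Substantiating precisely this Steinness claim is what occupies most of the paper's proof: one uses that $\oo_U(H)$ is $\phi$-ample to get a closed immersion $\varphi : \phi^{-1}(V) \hookrightarrow {\mathbb P}^n\times V$ over $V$ with $\oo_U(mH)\simeq \varphi^* p_1^*\oo_{{\mathbb P}^n}(1)$, then the Grauert--Remmert vanishing theorem, Grauert's direct image theorem and Cartan's Theorem~B on the Stein base $V$ to lift the canonical section $1\in H^0(U,\oo_U(kmH))$ to a section $h$ of $p_1^*\oo_{{\mathbb P}^n}(k)$ on ${\mathbb P}^n\times V$; this exhibits $U\setminus H$ as a closed analytic subspace of $D_h$, and the separate Lemma~\ref{Stein} proves $D_h$ is Stein by algebraizing it to the affine scheme $D_+(\tilde h)$. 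Without an argument of this kind (or an appeal to some substitute theorem), the central step of your proposal is unproven.

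A secondary problem: your justification that $\rho(\tau^{-1}(v))$ is a \emph{proper} compact subset of $\phi^{-1}(v)$ --- ``a compact surface with boundary cannot exhaust the closed surface'' --- is not valid, because $\rho$ is not assumed injective on fibres. A bordered surface can surject holomorphically onto a compact one: the Weierstrass function $\wp$, of degree $2$ from a torus $T$ to ${\mathbb P}^1$, remains surjective after removing from $T$ a small closed disc around a non-ramification point, since every value keeps a second preimage. So the existence of a divisor (in the paper, a section $s: V\to U$ with $H=s(V)$ disjoint from $\rho(\tau^{-1}(V))$ after shrinking $V$) is a point requiring care rather than an automatic consequence of the boundary being non-empty; note that the paper itself posits this section without your faulty justification. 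Finally, a small caveat on your topological step: $U\setminus D \to V$ is not proper, so Ehresmann's lemma does not directly give local triviality; the paper instead computes $H^2(U\setminus H,\dz) = H^2\bigl(\phi^{-1}(x)\setminus (H\cap \phi^{-1}(x)),\dz\bigr) = 0$ by Mayer--Vietoris arguments for $U$ and for the fibre, which is the safer route.
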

\begin{nt}  \em
The map  $\phi$ is surjective and every fiber of $\phi$ is a compact Riemann surface, i.e. a smooth projective algebraic curve over $\mathbb C$.
\end{nt}
\begin{nt} \em
During the proof of this theorem and Lemma~\ref{Stein} below, for any analytic space $Y$ we will denote by $\oo_Y$ the structure sheaf of $Y$. (Note that in Sections~\ref{fibr} and~\ref{rel}  we denoted by the same letter $\oo$ the sheaf of  $\C$-valued $C^{\infty}$-functions on a $C^{\infty}$-manifold, but now we will not use this notation.)
\end{nt}
\begin{proof}
We note that by Ehresmann's lemma, the morphism $\phi$ is a locally trivial $C^{\infty}$- fibration.
Besides, without loss of generality we can assume that $U$ is connected.

By shrinking $V$ and $\widetilde{V}$ to  smaller open subsets  (which we denote by the same letter)  if necessary,
we will assume that  there is an isomorphism  $U \simeq V \times \phi^{-1}(x)$ as  $C^{\infty}$-manifolds.

Again  by shrinking  $V$  and  $\widetilde{V}$ to  smaller open subsets, and  since $\phi$ is a smooth morphism, we will assume that there is a section $s : V \to U $ such that the divisor  $H = s(V)$ does not intersect the set $\rho ( \tau^{-1}(\widetilde{V}))$. The divisor $H$ is a Cartier divisor on~$U$.

Every fiber of $\phi$ is a connected smooth projective algebraic curve over $\mathbb C$. Therefore for every point $y$ on the fiber $\phi^{-1}(z)$, where a point $z \in V$,  the invertible sheaf $\oo_{\phi^{-1}(z)}(y)$
is an ample sheaf on  $\phi^{-1}(z)$. Therefore, the invertible sheaf $\oo_U(H)$ is an $\phi$-ample sheaf on $U$, see \cite[Chapter II, \S~1.10]{N2}, \cite[Prop.~1.4]{N1}. This is equivalent to the statement that  there exist an open covering $V = \bigcup V_{\lambda}$   and closed (analytic) immersions
$$
\varphi_{\lambda}  \, : \, \phi^{-1}(V_{\lambda}) \, \longhookrightarrow \, {\mathbb P}^{n_{\lambda}} \times V_{\lambda}
$$
over $V_{\lambda}$ for some $n_{\lambda}  \in {\mathbb N}$ such that
$$
\oo_{\phi^{-1}(V_{\lambda})}(m_{\lambda} H) \,  \simeq \, \varphi_{\lambda}^* \, p_1^* \oo_{{\mathbb P}^{n_{\lambda}}}(1)
$$
 for some $m_{\lambda}  \in {\mathbb N}$, where $p_1$ is the projection to ${\mathbb P}^{n_{\lambda}}$.

Changing  $V$ and $\widetilde{V}$ again, we will assume that $V$ is $V_{\lambda}$ and $U$ is $\phi^{-1}(V_{\lambda})$ for some~$\lambda$. We denote $n = n_{\lambda}$, $m = m_{\lambda}$ and $\varphi = \varphi_{\lambda}$.

Let $J$ be the coherent $\oo_{{\mathbb P}^{n} \times V}$-ideal  sheaf that defines the closed analytic subspace $\varphi ( U )$ in  ${\mathbb P}^{n} \times V$.
 We have an exact sequence of sheaves on ${\mathbb P}^{n} \times V$
 $$
 0 \lrto J \lrto \oo_{{\mathbb P}^{n} \times V}  \lrto \varphi_* \oo_{U}   \lrto 0 \, \mbox{.}
 $$

From this exact sequence we have the following exact sequence of sheaves on ${\mathbb P}^{n} \times V$ for any integer $l$:
$$
0 \lrto J \otimes_{\oo_{{\mathbb P}^{n} \times V}} p_1^* \oo_{{\mathbb P}^{n}}(l)  \lrto p_1^* \oo_{{\mathbb P}^{n}}(l)  \lrto  \varphi_* \oo_{U}(lm H)  \lrto 0   \, \mbox{.}
$$
This leads to an exact sequence of sheaves on $V$
\begin{multline*}
0 \lrto (p_1)_* \left( J \otimes_{\oo_{{\mathbb P}^{n} \times V}} p_1^* \oo_{{\mathbb P}^{n}}(l)  \right)   \lrto
(p_1)_* p_1^* \oo_{{\mathbb P}^{n}}(l)  \lrto (p_1)_*  \varphi_* \oo_{U}(lm H) \lrto \\ \lrto R^1 (p_1)_* \left( J \otimes_{\oo_{{\mathbb P}^{n} \times V}} p_1^* \oo_{{\mathbb P}^{n}}(l)  \right)  \, \mbox{.}
\end{multline*}

Again  by shrinking  $V$ and $\widetilde{V}$ to smaller open subsets,  by the Grauert and Remmert theorem (see, e.g.,    \cite[Chapter~IV, \S~2]{BS}), there is an integer $k > 0$ such that
$$
R^1 (p_1)_* \left( J \otimes_{\oo_{{\mathbb P}^{n} \times V}} p_1^* \oo_{{\mathbb P}^{n}}(k)  \right) = 0  \, \mbox{.}
$$
Therefore we have the following exact sequence of sheaves on $V$:
\begin{equation}  \label{ex-sh}
0 \lrto (p_1)_* \left( J \otimes_{\oo_{{\mathbb P}^{n} \times V}} p_1^* \oo_{{\mathbb P}^{n}}(k)  \right)   \lrto
(p_1)_* p_1^* \oo_{{\mathbb P}^{n}}(k)  \lrto (p_1)_*  \varphi_* \oo_{U}(km H) \lrto  0  \, \mbox{.}
\end{equation}

Again  by shrinking  $V$  and $\widetilde{V}$ to  smaller open subsets, we can assume that $V$ is a Stein contractible manifold. Since $p_1$ is a proper morphism,  by the Grauert direct image theorem (see, e.g., \cite[Chapter~III, \S~2]{BS})  the sheaf
$ (p_1)_* \left( J \otimes_{\oo_{{\mathbb P}^{n} \times V}} p_1^* \oo_{{\mathbb P}^{n}}(k)  \right)  $ is coherent. Therefore, by Cartan theorem we have
$$
H^1 \left(V, (p_1)_* \left( J \otimes_{\oo_{{\mathbb P}^{n} \times V}} p_1^* \oo_{{\mathbb P}^{n}}(k)  \right) \right)  = 0   \, \mbox{.}
$$
Hence and from~\eqref{ex-sh} we have that the natural map
$$
H^0 \left({\mathbb P}^{n} \times V, \,  p_1^* \oo_{{\mathbb P}^{n}}(k) \right)  \xlrto{\theta}   H^0 \left( U ,\oo_{U}(km H) \right)
$$
is surjective.

Let $1 \in H^0 \left( U ,\oo_{U}(km H) \right)$ be the natural element
that is the image of the element $1 \in H^0 \left( U ,\oo_{U} \right)$.
The zero locus of the section $1 \in H^0 \left( U ,\oo_{U}(km H) \right)$  has the support $H$ as a complex manifold.
Let $h \in H^0 \left({\mathbb P}^{n} \times V, \,  p_1^* \oo_{{\mathbb P}^{n}}(k) \right) $ such that $\theta(h)=1$.
Let the open subset $D_h  \subset {\mathbb P}^{n} \times V$ be the  complement to the zero locus of the section $h$ on ${\mathbb P}^{n} \times V$. Then the morphism $\varphi$ restricted to $U \setminus H$ is a closed immersion
$$
U \setminus H    \longhookrightarrow D_h  \, \mbox{.}
$$

We claim that $D_h$ is a Stein manifold. This is proved (independently of the proof of this theorem) in Lemma~\ref{Stein} after this theorem.
Since any closed complex analytic subspace of a Sten space is again a Stein space (see, e.g., \cite[Chapter V, \S~1]{GR}), the complex manifold $U \setminus H $ is Stein.

By the Oka principle (see~\cite[Chapter~5, \S~2.5]{GR}),
$$
H^1(U \setminus H, \, \oo^*_{U \setminus H})  = H^2(U \setminus H,  \, \dz)  \, \mbox{.}
$$
(This also follows from the holomorphic exponential sheaf sequence on $U \setminus H$.)

Now, using the Mayer-Vietoris sequence for $U$ and the Mayer-Vietoris sequence for $\phi^{-1}(x)$, we have
$$
H^2\left(U \setminus H, \, \dz\right)  = H^2\left( \phi^{-1}(x)  \setminus (H \cap \phi^{-1}(x) ), \, \dz\right) = 0  \, \mbox{.}
$$

Therefore the holomorphic line bundle $Q$ (from the condition of this theorem) restricted to $U \setminus H$ is trivial.
A trivialization of this bundle gives the trivialization of  the line bundle $E \mid_{\tau^{-1}(\widetilde{V})}$
which is necessary for the condition~$(\bigstar)$ to be fulfilled.
Hence the line bundle $E$ satisfies the condition $(\bigstar)$.
\end{proof}

\medskip

\begin{lemma}  \label{Stein}
Let $V$ be a complex analytic Stein space. Consider any section
$$h \in H^0 \left({\mathbb P}^{n} \times V, \,  p_1^* \oo_{{\mathbb P}^{n}}(k) \right) \, \mbox{,}$$
where $k > 0$,  the sheaf  $\oo_{{\mathbb P}^{n}}(k)  = \oo_{{\mathbb P}^{n}}(1)^{\otimes k} $ is an invertible (holomorphic) sheaf on   ${\mathbb P}^{n}$, and $p_1$ is the projection to ${\mathbb P}^{n}$.
Let the open subset
$$D_h  \subset {\mathbb P}^{n} \times V$$
 be the  the complement to the zero locus of the section $h$ on ${\mathbb P}^{n} \times V$. Then $D_h$ is a Stein space.
\end{lemma}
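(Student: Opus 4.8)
The plan is to prove that $D_h$ is Stein by realizing it as a closed analytic subspace of a Stein space, using the two standard facts already invoked in the paper: a finite product of Stein spaces is Stein, and a closed analytic subspace of a Stein space is again Stein (see \cite[Chapter V, \S 1]{GR}). The strategy has two moves: reduce the exponent $k$ to $1$, and then linearize by passing to an affine chart of the line bundle.

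First I would reduce to the case $k=1$ via the $k$-th Veronese embedding $\nu_k \colon \mathbb{P}^n \hookrightarrow \mathbb{P}^N$ with $N = \binom{n+k}{k}-1$, which satisfies $\nu_k^* \oo_{\mathbb{P}^N}(1) \simeq \oo_{\mathbb{P}^n}(k)$ and realizes $\mathbb{P}^n \times V$ as a closed analytic subspace of $\mathbb{P}^N \times V$. Since $(p_2)_* p_1^* \oo_{\mathbb{P}^N}(1)$ is, by base change, the free $\oo_V$-module on the linear coordinates $x_0, \dots, x_N$, taking global sections gives $H^0(\mathbb{P}^N \times V, p_1^* \oo_{\mathbb{P}^N}(1)) = \oo(V)^{N+1}$, and $\nu_k^*$ identifies the $x_j$ with the degree-$k$ monomials; hence the restriction onto $H^0(\mathbb{P}^n \times V, p_1^* \oo_{\mathbb{P}^n}(k))$ is surjective. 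Thus $h$ lifts to a section $\tilde h = \sum_j s_j x_j$ with $s_j \in \oo(V)$, and $D_h = (\mathbb{P}^n \times V) \cap D_{\tilde h}$ is closed in $D_{\tilde h}$, so it suffices to prove that $D_{\tilde h} \subset \mathbb{P}^N \times V$ is Stein.

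For this $k=1$ case I would construct an explicit biholomorphism. On $D_{\tilde h}$ the section $\tilde h$ is nonvanishing, so each ratio $u_j = x_j/\tilde h$ is a well-defined holomorphic function, and I would study the map $\Psi \colon D_{\tilde h} \to \mathbb{C}^{N+1} \times V$ given by $\Psi([x],v) = (u_0, \dots, u_N, v)$. The identity $\sum_j s_j(v) u_j = \tilde h/\tilde h = 1$ shows $\Psi$ lands in $\tilde D = \{ (u,v) : \sum_j s_j(v) u_j = 1 \}$, while $(u,v) \mapsto ([u_0 : \cdots : u_N], v)$ is a holomorphic inverse, well-defined precisely because the relation $\sum_j s_j(v) u_j = 1$ forces $u \neq 0$ and lands back in $D_{\tilde h}$ since $\tilde h([u],v) = 1 \neq 0$. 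Hence $\Psi$ is a biholomorphism of $D_{\tilde h}$ onto $\tilde D$. Now $\mathbb{C}^{N+1} \times V$ is Stein as a product of Stein spaces, $\tilde D$ is cut out in it by the single holomorphic equation $\sum_j s_j(v) u_j - 1 = 0$ and so is a closed analytic subspace, hence Stein; therefore $D_{\tilde h} \simeq \tilde D$ is Stein, and with it the closed subspace $D_h$.

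The main point to verify carefully is that $\Psi$ is genuinely a biholomorphism onto a closed subspace — injectivity and the holomorphy of the inverse — together with the relative extension of $h$ to $\tilde h$ across the Veronese embedding; both are elementary but should be checked. One should also note the degenerate case in which $h$ vanishes identically on a whole fiber $\mathbb{P}^n \times \{v\}$: there $D_h$ is empty over that point, matching the empty fiber of $\tilde D$ (where the equation $\sum_j s_j(v)u_j = 1$ becomes $0 = 1$), so the isomorphism $D_{\tilde h} \simeq \tilde D$ remains valid and the empty case is Stein trivially.
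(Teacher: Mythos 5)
Your proof is correct, and it takes a genuinely different route from the paper's. The paper starts from the same K\"unneth identification $H^0(\mathbb{P}^n\times V,\,p_1^*\mathcal{O}_{\mathbb{P}^n}(k))=H^0(\mathbb{P}^n,\mathcal{O}_{\mathbb{P}^n}(k))\otimes_{\mathbb{C}}H^0(V,\mathcal{O}_V)$ but then goes algebraic: writing $h=\sum_{1\le i\le e}b_i\otimes a_i$, it forms the polynomial ring $A=\mathbb{C}[a_1,\dots,a_e]$, gets a morphism $\alpha\colon V\to(\Spec A)^{\rm an}$, cites the classical fact that $D_+(\tilde h)\subset\mathbb{P}^n\times\Spec A$ is an affine scheme (so $(D_+(\tilde h))^{\rm an}$ is Stein), and recovers $D_h$ as the fiber product $(D_+(\tilde h))^{\rm an}\times_{(\Spec A)^{\rm an}}V$, realized via the diagonal trick as a closed analytic subspace of the Stein product $(D_+(\tilde h))^{\rm an}\times V$. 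You stay entirely inside complex analysis: the Veronese embedding reduces $k$ to $1$ (the lift of $h$ to $\tilde h=\sum_j s_jx_j$ is indeed unproblematic, being $H^0(\mathbb{P}^N,\mathcal{O}(1))\twoheadrightarrow H^0(\mathbb{P}^n,\mathcal{O}(k))$ tensored over $\mathbb{C}$ with $H^0(V,\mathcal{O}_V)$, with the projective factor finite-dimensional), and your map $\Psi=(x_j/\tilde h;v)$ is an explicit biholomorphism of $D_{\tilde h}$ onto the hypersurface $\{\sum_j s_j(v)u_j=1\}\subset\mathbb{C}^{N+1}\times V$ --- which in effect re-proves by hand, analytically, exactly the affineness of $D_+(\tilde h)$ that the paper imports from scheme theory. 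Both arguments close with the same two permanence facts from Grauert--Remmert (finite products of Stein spaces are Stein; closed analytic subspaces of Stein spaces are Stein). What your route buys: no schemes, no analytification, no fiber products, complete explicitness, and no reliance on the finiteness of the decomposition of $h$ (your $s_j$ are arbitrary elements of $H^0(V,\mathcal{O}_V)$, whereas the paper needs a finite sum to form $A$, though that finiteness is automatic). What the paper's route buys: it is shorter once the standard algebraic fact is granted, and it never touches coordinates, so nothing needs checking at the level of morphisms of possibly non-reduced analytic spaces; in your version the one point to phrase carefully is the abuse $\tilde h([u],v)=1$ (a section of a line bundle has no canonical numerical value --- the honest statement is the identity $\sum_j s_j\,(x_j/\tilde h)=1$ in $H^0(D_{\tilde h},\mathcal{O})$, which also shows $\Psi$ factors through the subspace $\{\sum_j s_j(v)u_j=1\}$ scheme-theoretically, so your argument does go through even for non-reduced $V$). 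These are presentational matters, not gaps.
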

\begin{proof}
Using the \v{C}ech cohomology or the simplest case of the K\"unneth formula for coherent analytic sheaves (see~\cite{KV}, \cite[Chapter~IX, \S~5.B]{Dem}), we have
$$
H^0 \left({\mathbb P}^{n} \times V, \,  p_1^* \oo_{{\mathbb P}^{n}}(k) \right)  =    H^0 \left({\mathbb P}^{n}, \oo_{{\mathbb P}^{n}}(k) \right)  \otimes_{\mathbb C} H^0  \left( V, \oo_V    \right)  \, \mbox{.}
$$

Let the section $h$ has a decomposition
\begin{equation}  \label{rhs}
h = \sum_{ 1 \le i  \le e} b_i \otimes a_i   \, \mbox{,}
\end{equation}
where $b_i \in  H^0 \left({\mathbb P}^{n}, \oo_{{\mathbb P}^{n}}(k) \right) $ and $ a_i \in H^0  \left( V, \oo_V    \right)$. Consider a polynomial ring
$$
A = \C[a_1, \ldots, a_e]   \, \mbox{.}
$$
We have the natural homomorphism of $\C$-algebras $A  \to H^0  \left( V, \oo_V    \right)$. This homomorphism induces the morphism
$$
\alpha  \, : \, V  \lrto (\Spec A)^{\rm an}   \, \mbox{,}
$$
where for every separated  scheme $T$ of finite type over $\C$ by $T^{\rm an }$ we denote the associated analytic space.

Let $\tilde{h}$  be an element from  $H^0 \left({\mathbb P}^{n} \times \Spec A, \,  p_1^* \oo_{{\mathbb P}^{n}}(k) \right)$ given by the right hand side of formula~\eqref{rhs}. Let the open subscheme
$$
D_+ (\tilde{h}) \, \subset  \,  {\mathbb P}^{n} \times \Spec A
$$
be the  the complement to the zero locus of the section $\tilde{h}$ on the scheme ${\mathbb P}^{n} \times \Spec A$.
It is well-known that $D_+(\tilde{h})$ is an affine scheme.
Hence $(D_+(\tilde{h}))^{\rm an}$ is a Stein space.

We have
$$
D_h \, \simeq \, (D_+(\tilde{h}))^{\rm an}  \times_{(\Spec A)^{\rm an}} V   \, \mbox{,}
$$
where we used the morphism $\alpha$.

Now $(D_+(\tilde{h}))^{\rm an}  \times V$ is a Stein space. The space
${(D_+(\tilde{h}))^{\rm an}  \times_{(\Spec A)^{\rm an}} V}$ is a closed analytic  subspace in  $(D_+(\tilde{h}))^{\rm an}  \times V$, since it is the preimage (in the category of analytic spaces) of the diagonal in ${(\Spec A)^{\rm an}  \times (\Spec A)^{\rm an}}$ under the natural morphism. Hence, $D_h$ is a Stein space. (See in~\cite[Chapter V, \S~1]{GR}   the corresponding facts on Stein spaces, which we used.)
\end{proof}

\bigskip

As an application of Theorems~\ref{th-1} and~\ref{fam} we have the following reciprocity law.
\begin{Th}[Reciprocity law inside a family of compact Riemann surfaces] \label{Res_laws}
Let $B$ be a complex manifold.  Let ${\pi_i : M_i \to B}$ be a fibration in oriented circles, where $1 \le i \le n$.
\quash{Let
$$
M = \bigsqcup_{1 \le i \le n} M_i    \qquad \mbox{and} \qquad  \pi = \bigsqcup_{1 \le i \le n}  \pi_i  \, :  \, M \lrto B  \, \mbox{.}
$$}
Let $\phi  : X \to B$  be a surjective smooth proper morphism of  relative dimension $1$ between complex manifolds, and $\bigsqcup_{1 \le i \le n} M_i \longhookrightarrow X$
be a $C^{\infty}$-embedding such that the following diagram
$$
\xymatrix{
 \bigsqcup_{1 \le i \le n} M_i \, \ar@{^{(}->}[rr]   \ar[d]^{\bigsqcup_{1 \le i \le n}  \pi_i}  &&  X \ar[d]_{\phi}  \\
B  \ar@{=}[rr]  && B
}
$$
commutes and for any $x \in B$ the union of circles $ \bigsqcup_{1 \le i \le n} \pi_i^{-1}(x)$ is the boundary of a compact Riemann surface with boundary, where this surface is  embedded into the compact Riemann surface $\phi^{-1}(x)$.
Let $Q$ and $S$ be holomorphic line bundles on $X$, and let
$$
L_i = Q \mid_{M_i}     \mbox{,} \qquad N_i = S \mid_{M_i}
$$
be line bundles on $M_i$, where $1 \le i \le n$.
Then in the group $H^3(B, \dz)$ we have
$$
\sum_{1 \le i \le n} (\pi_i)_* \, (c_1(L_i) \cup c_1(N_i)) = 0  \, \mbox{.}
$$
\end{Th}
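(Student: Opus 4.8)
**The plan is to reduce Theorem~\ref{Res_laws} to Theorems~\ref{th-1} and~\ref{fam} by constructing an appropriate $C^{\infty}$-family of compact Riemann surfaces with boundary.**

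The key observation is that the hypotheses of Theorem~\ref{Res_laws} are almost exactly the hypotheses needed to apply the two previous theorems in sequence; the main task is to manufacture the manifold-with-boundary $P$ and the $C^{\infty}$-family $\tau : P \to B$ whose boundary is $\bigsqcup_i M_i$. First I would use the fiberwise hypothesis: for every $x \in B$ the circles $\bigsqcup_i \pi_i^{-1}(x)$ bound a compact Riemann surface with boundary $\Sigma_x$ embedded in the fiber $\phi^{-1}(x)$. The goal is to assemble these $\Sigma_x$ into a single $C^{\infty}$-submanifold-with-boundary $P \subset X$ such that $\tau = \phi\mid_P : P \to B$ is a $C^{\infty}$-family of compact Riemann surfaces with boundary in the sense of Definition~\ref{fam-rim}, with $\partial P = \bigsqcup_i M_i$. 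Since $\bigsqcup_i M_i$ is already embedded in $X$ as a $C^{\infty}$-submanifold transverse to the fibers (it fibers in circles over $B$ via the commuting diagram), the region $P$ it cuts out in the fiber bundle $X \to B$ is a $C^{\infty}$-manifold with boundary, fibering over $B$ with fibers the $\Sigma_x$. The orientation and smoothness of the family follow because $\phi$ is a smooth proper holomorphic morphism and the embedded circles vary smoothly.

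Once $P$ is constructed, the restrictions $E = Q\mid_P$ and $H = S\mid_P$ are $C^{\infty}$-holomorphic line bundles on $P$, since $Q$ and $S$ are holomorphic on all of $X$ and hence holomorphic on the interior of each fiber $\Sigma_x \subset \phi^{-1}(x)$. Moreover, by construction $L_i = Q\mid_{M_i} = E\mid_{M_i}$ and $N_i = S\mid_{M_i} = H\mid_{M_i}$, matching the setup of Theorem~\ref{th-1} precisely. It remains to verify that $E$ and $H$ satisfy condition $(\bigstar)$, and this is exactly what Theorem~\ref{fam} supplies: taking $U = \phi^{-1}(V)$ for a suitable connected neighborhood $V$ of a given point, with $\phi\mid_U$ the smooth proper morphism of relative dimension $1$, the holomorphic bundle $Q$ (respectively $S$) on $U$, and the morphism $\rho$ the inclusion $\tau^{-1}(V) = P\cap\phi^{-1}(V) \hookrightarrow U$, all hypotheses of Theorem~\ref{fam} are met and $(\bigstar)$ holds. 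Then Theorem~\ref{th-1} gives $\sum_i (\pi_i)_*(c_1(L_i)\cup c_1(N_i)) = 0$ directly.

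**The main obstacle I anticipate is the construction of $P$ as a genuine $C^{\infty}$-family**, i.e.\ showing that the fiberwise bounding surfaces $\Sigma_x$ glue into a smooth submanifold-with-boundary of $X$ whose projection is locally trivial. The fiberwise bounding condition is stated pointwise, so one must argue that $P$ is automatically smooth: the point is that $\bigsqcup_i M_i$ is a closed $C^{\infty}$-submanifold of $X$ transverse to every fiber (being itself a fiber bundle over $B$ in circles), so it separates each fiber $\phi^{-1}(x)$ into regions, and the interior region bounded by these circles is the required $\Sigma_x$. Since separation into connected components and the resulting closure vary continuously (indeed smoothly) under the local triviality of $\phi$ guaranteed by Ehresmann's lemma, $P = \bigcup_x \Sigma_x$ is a $C^{\infty}$-manifold with boundary and $\tau = \phi\mid_P$ is locally trivial. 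Once this geometric bookkeeping is in place, the rest is a purely formal invocation of the two preceding theorems, so the entire weight of the argument rests on making the passage from the pointwise hypothesis to the global family rigorous.
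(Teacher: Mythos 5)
Your proposal is correct and follows essentially the same route as the paper: build the $C^{\infty}$-family $\tau : P \to B$ of compact Riemann surfaces with boundary inside $X$ with $\partial P = \bigsqcup_i M_i$, restrict $Q$ and $S$ to $P$, invoke Theorem~\ref{fam} (with $U = \phi^{-1}(V)$, $\rho$ the inclusion) to get condition $(\bigstar)$, and conclude via Theorem~\ref{th-1}. The only difference is that the paper states the existence of this family in one line, whereas you spell out why the fiberwise bounding surfaces $\Sigma_x$ assemble into a smooth submanifold-with-boundary --- a detail the paper leaves implicit, and which you handle correctly.
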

\begin{proof}
We have a family of compact Riemann surfaces with boundary inside $X$.
The boundary of this family is $\bigsqcup_{1 \le i \le n} M_i$. We have the restrictions of line bundles $Q$ and $S$ to this family.
By Theorem~\ref{fam}  these restrictions satisfy  the condition $(\bigstar)$. Then we apply Theorem~\ref{th-1}.
\end{proof}

\begin{nt}   \label{rem-last}  \em
Let $\pi : M \to B$ be a fibration in oriented circles, and $L$ be a complex line bundle on $M$. In~\cite{Osip25}  we constructed the determinant gerbe ${\mathcal Det}(L)$
with the class $[{\mathcal Det}(L)]$ in $H^3(B, \dz)$ and proved the following topological Riemann-Roch theorem with values in the group $H^3(B, \dz)$:
$$
12 \, [{\mathcal Det}(L)] = 6  \, \pi_* (c_1(L) \cup c_1(L))  \, \mbox{.}
$$
(Note that in the group $H^3(B, \C)$ the equality $2  [{\mathcal Det}(L)] =    \pi_* (c_1(L) \cup c_1(L))$ was proved earlier by another methods in~\cite{BKTV}.)

Now if $B$ is a complex manifold, and $\pi : M \to B$ is embedded into a  holomorphic family of compact Riemann surfaces over $B$ (i.e. a surjective smooth proper  morphism of relative dimension $1$ to $B$) such that in every fiber of this family a fiber of $\pi$ is the boundary of an embedded compact Riemann surface with boundary, and $L$ can be extended to a holomorphic line bundle on this family, then from Theorem~\ref{Res_laws}  it follows that
$$
12 \, [{\mathcal Det}(L)]  =0  \, \mbox{.}
$$

The investigation of the determinant gerbe ${\mathcal Det}(L)$ in this case was the starting point of this paper.
\end{nt}

\begin{nt} \label{last} \em
The previous results immediately extend to pullbacks of complex line bundles
to fiber products $\bigsqcup_{1 \le i \le n} M_i \times_{\widetilde{B}} B$ and $M \times_{\widetilde{B}} B$, where $\bigsqcup_{1 \le i \le n} M_i \lrto B$ and $M \to B$ were used in Theorem~\ref{Res_laws} and Remark~\ref{rem-last}, and $\widetilde{B}$ is a finite-dimensional \linebreak $C^{\infty}$-ma\-ni\-fold together with a $C^{\infty}$-map $\widetilde{B} \to B$. This follows from the fact that the Gysin map, the Chern classes, and their $\cup$-products commute with these pullbacks.
\end{nt}

\vspace{0.3cm}

\noindent Steklov Mathematical Institute of Russsian Academy of Sciences, 8 Gubkina St., Moscow 119333, Russia,
 {\em and}

 \noindent National Research University Higher School of Economics, Laboratory of Mirror Symmetry,  6 Usacheva str., Moscow 119048, Russia,
{\em and}

\noindent National University of Science and Technology ``MISiS'',  Leninsky Prospekt 4, Moscow  119049, Russia

\noindent {\it E-mail:}  ${d}_{-} osipov@mi{-}ras.ru$

\end{document}